\newtheorem{defn0}{Definition}[section]
\newtheorem{prop0}[defn0]{Proposition}
\newtheorem{thm0}[defn0]{Theorem}
\newtheorem{lemma0}[defn0]{Lemma}
\newtheorem{claim0}[defn0]{Claim}
\newtheorem{corollary0}[defn0]{Corollary}
\newtheorem{example0}[defn0]{Example}
\newtheorem{remark0}[defn0]{Remark}
\newtheorem{assumption0}[defn0]{Assumption}
\newtheorem{conjecture0}[defn0]{Conjecture}
\newtheorem{notation0}[defn0]{Notation}
\newtheorem{question0}[defn0]{Question}
\newenvironment{theorem}{\begin{thm0}}{\end{thm0}}
\newenvironment{lemma}{\begin{lemma0}}{\end{lemma0}}
\newenvironment{remark}{\begin{remark0}\rm}{\end{remark0}}
\newcommand{\M}{\mathrm{M}}
\newcommand{\sign}{\mathrm{sign}}
\newcommand{\GL}{{\mathrm{GL}}}
\newcommand{\SO}{{\mathrm{SO}}}
\newcommand{\End}{{\mathrm{End}}}
\newcommand{\SL}{{\mathrm {SL}}}
\newcommand{\Z}{{\mathbb Z}}
\newcommand{\A}{{\mathbb A}}
\newcommand{\Q}{{\mathbb Q}}
\newcommand{\C}{{\mathbb C}}
\newcommand{\R}{{\mathbb R}}
\newcommand{\N}{{\mathbb N}}
\newcommand{\cA}{{\mathcal A}}
\newcommand{\cH}{{\mathcal H}}
\newcommand{\cG}{{\mathcal G}}
\newcommand{\cGL}{{\mathcal GL}}
\newcommand{\cU}{{\mathcal U}}
\newcommand{\cP}{{\mathcal P}}
\newcommand{\Sym}{{\mathrm {Sym}}}
\newcommand{\ra}{{\rightarrow}}
\newcommand{\Hom}{{\mathrm {Hom}}}
\title{Eichler-Shimura isomorphism and group cohomology on arithmetic groups}
\author{Santiago Molina}
\begin{document}

\address{CRM,
Centre de Recerca Matemàtica, Barcelona, Spain}
\email{smolina@crm.cat}
\footnote{The author is supported in part by DGICYT Grant MTM2015-63829-P.}

\maketitle

\section*{Introduction}

The Eichler-Shimura isomorphism establishes a bijection between the space of modular forms and certain cohomology groups with coefficients in a space of polynomials.
More precisely, let $k\geq 2$ be an integer and let $\Gamma\subseteq\SL_2(\Z)$ be a congruence subgroup,
then we have the following isomorphism of Hecke modules
\begin{equation}\label{E-S}
M_k(\Gamma,\C)\oplus \overline{S_k(\Gamma,\C)}\simeq H^1(\Gamma,V(k)^\vee),
\end{equation}
where $V(k)^\vee$ is the dual of the $\C$-vector space of homogenous polynomials of degree $k-2$, $M_k(\Gamma,\C)$ is the space of modular forms of weight $k$ and $S_k(\Gamma,\C)\subset M_k(\Gamma,\C)$ is the subspace of cuspidal modular forms (see \cite[Thm. 8.4]{Shim} and \cite[Thm. 6.3.4]{Hida}).

This isomorphism can be interpreted in geometric terms. Indeed, a modular form of weight $k$ can be interpreted as a section of certain sheaf of differential forms on the open modular curve attached to $\Gamma$. With this in mind, the Eichler-Shimura isomorphism can be obtained comparing deRham and singular cohomology, noticing that the singular cohomology of the open modular curve is given by the group cohomology $H^\bullet(\Gamma,V(k)^\vee)$. The aim of this paper is to omit this geometric interpretation and to provide a new group cohomological interpretation.

The restriction of the Eichler-Shimura isomorphism to the spaces of cuspidal modular forms is given by the morphisms
\[
\partial^\pm:S_k(\Gamma,\C)\longrightarrow H^1(\Gamma,V(k)^\vee),
\]
where
\[
\partial^\pm(f)(\gamma)(P)=\int_{z_0}^{\gamma z_0}P(1,-\tau)f(\tau)d\tau\pm \int_{z_0}^{\gamma z_0}P(1,\bar\tau)f(-\bar\tau)d(-\bar\tau).
\]
for any $z_0$ in $\cH$ the Poincaré hyperplane, $\gamma\in\Gamma$ and $P\in V(k)$. In fact, the morphism defining the cuspidal part of \eqref{E-S} is given by
\[
\begin{array}{ccc}
S_k(\Gamma,\C)\oplus \overline{S_k(\Gamma,\C)}&\longrightarrow &H^1(\Gamma,V(k)^\vee)\\
 (f_1,\bar f_2)&\longmapsto& (\partial^++\partial^-)(f_1)+(\partial^+-\partial^-)(f_2^*),
\end{array}
\]
where $f_2^*\in S_k(\Gamma,\C)$ is obtained from $f_2$ by complex conjugating its Fourier coefficients.
The image of $\partial^\pm$ lies in the subspaces $H^1(\Gamma,V(k)^\vee)^\pm\subset H^1(\Gamma,V(k)^\vee)$ where the natural action of $\omega=\left(\begin{array}{cc}1&0\\0&-1\end{array}\right)$ normalizing $\Gamma$ acts by $\pm 1$.

In a general setting, $F$ is a totally real number field of degree $d$, $G$ is the multiplicative group of a quaternion algebra $A$ over $F$, and $\phi$ is a weight $\underline{k}=(k_1,\cdots, k_d)$ cuspidal automorphic form of $G$ with level $\cU\subset G(\A^\infty)$ and central character $\psi:\A^\times/F^\times\rightarrow\C^\times$. We assume that $\psi_{\sigma_i}(x)={\rm sign}(x)^{k_i}|x|^{\mu_i}$, for any archimedean place $\sigma_i:F\hookrightarrow\R$.
In this scenario, by an automorphic form we mean a function on $\cH^r\times G(\A^\infty)/\cU$, where $\cH$ is the Poincaré upperplane and $r$ is the cardinal of the set $\Sigma$ of archimedean places where $A$ splits, with values in $\bigotimes_{\sigma_i\in\infty\setminus\Sigma}V_{\mu_i}(k_i)^\vee$ (see \S \ref{secdiscser} for a precise definition of $V_{\mu_i}(k_i)$), that satisfies the usual transformation laws with respect to the weight-$k_i$-actions of $G(F)$. 
The interesting cohomology subgroups to consider are:
\[
H^r(G(F)^+,\cA(V_\psi(\underline{k}),\C)^\cU)=\bigoplus_{i=1}^n H^r(\Gamma_{g_i},V_\psi(\underline{k})^\vee),\qquad \Gamma_{g_i}=G(F)^+\cap g_i\cU g_i^{-1},
\]
where $V_\psi(\underline{k})$ is the tensor product of the polynomial spaces $V_{\mu_i}(k_j)$ ($j=1,\cdots,d$), $G(F)^+\subseteq G(F)$ is the subgroup of totally positive elements, $\{g_i\}_{i=1,\cdots,n}\subset G(\A^\infty)$ is a set of representatives of the double coset space $G(F)^+\backslash G(\A^\infty)/\cU$,  and $\cA(V_\psi(\underline{k}),\C)^\cU=C(G(\A^\infty)/\cU,V_\psi(\underline{k})^\vee)$. Similarly as in the classical case, for any character $\varepsilon:G(F)/G(F)^+\longrightarrow\pm1$ we can define a morphism
\[
\partial^\varepsilon:S_{\underline{k}}(\cU,\psi)\longrightarrow H^r(G(F)^+,\cA(V_\psi(\underline{k}),\C)^\cU)({\varepsilon}),
\]
from the set $S_{\underline{k}}(\cU,\psi)$ of automorphic cuspforms of weight $\underline{k}$, level $\cU$ and central character $\psi$, to the $\varepsilon$-isotypical component of $H^r(G(F)^+,\cA(V_\psi(\underline{k}),\C)^\cU)$. Such a map is given by:
\[
\partial^\epsilon\phi=\sum_{\gamma\in G(F)/G(F)^+}\epsilon(\gamma)\partial\phi^\gamma,
\]
where $\partial\phi\in H^r(G(F)^+,\cA(V_\psi(\underline{k}),\C))$ is the class of the cocycle
\begin{equation*}\label{cocycleex}
(G(F)^+)^r\ni(g_1,g_2,\cdots, g_r)\longmapsto \int_{\tau_1}^{g_1\tau_1}\cdots\int^{g_1\cdots g_r\tau_r}_{g_1\cdots g_{r-1}\tau_r}P_{\Sigma}(1,-\underline{z})\langle\phi(\underline{z},g),P^{\Sigma}\rangle d\underline{z},
\end{equation*}
with $P_{\Sigma}\in\bigotimes_{j=1}^rV_{\mu_j}(k_j)$, $P^{\Sigma}\in \bigotimes_{j=r+1}^dV_{\mu_j}(k_j)$ and $\underline{z}=(z_1,\cdots,z_r),(\tau_1,\cdots,\tau_r)\in\cH^r$. Our result will provide a group cohomological interpretation to the morphisms $\partial^\varepsilon$, for any character $\varepsilon$.

Let $F_\infty\simeq\R^d$ be the product of the archimedean completions of $F$, let $\cG_\infty$ be the Lie algebra of $G(F_\infty)$ and let $K_\infty\subseteq G(F_\infty)$ be a maximal compact subgroup. 
Then the $(\cG_\infty,K_\infty)$-module generated by $\phi$ is isomorphic to $D_\psi(\underline{k})$, the tensor product of discrete series of weight $k_j$ at archimedean places in $\Sigma$ and polynomial spaces $V_{\mu_j}(k_j)$ at archimedean places not in $\Sigma$. This implies that any $\phi\in S_{\underline{k}}(\cU,\psi)$ provides an element
\[
\phi\in H^0(G(F),\cA(D_\psi(\underline{k}),\C)^\cU);\qquad \cA(D_\psi(\underline{k}),\C)^\cU:=\Hom_{(\cG_\infty,K_\infty)}(D_\psi(\underline{k}),\cA^\cU),
\]
where $\cA^\cU$ is the $(\cG_\infty,K_\infty)$-module of smooth admissible functions $f:G(\A)/\cU\rightarrow\C$. Our main result (Theorem \ref{mainres}) can be rewritten as follows: 
\begin{theorem}
There exists an exact sequence of $G(F)$-modules
\begin{equation*}\label{exseqs}
0\rightarrow\cA(V_\psi(\underline{k})(\varepsilon),\C)^\cU \rightarrow \cA(I^{\varepsilon}_1(\underline{k}),\C)^\cU\rightarrow \cA(I^{\varepsilon}_2(\underline{k}),\C)^\cU\rightarrow\cdots\rightarrow  \cA(D_\psi(\underline{k}),\C)^\cU\rightarrow 0,
\end{equation*}
such that, up to an explicit constant, the morphism $\partial^\epsilon$ is given by the corresponding connection morphism
\[
H^{0}(G(F),\cA(D_\psi(\underline{k}),\C)^\cU)\longrightarrow H^{r}(G(F),\cA(V_\psi(\underline{k})(\varepsilon),\C)^\cU)\simeq H^{r}(G(F)^+,\cA(V_\psi(\underline{k}),\C)^\cU)(\varepsilon).
\]
\end{theorem}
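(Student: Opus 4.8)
The plan is to exhibit the exact sequence as the image, under the contravariant functor $\Hom_{(\cG_\infty,K_\infty)}(-,\cA^\cU)$, of a Bernstein--Gelfand--Gelfand type resolution of the discrete-series module $D_\psi(\underline{k})$, and then to compute the resulting iterated connecting morphism by choosing explicit cocycles and primitives.

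First I would work one archimedean place of $\Sigma$ at a time. For the relevant real form (essentially $\GL_2(\R)$), the weight-$k_j$ discrete-series $(\cG_\infty,K_\infty)$-module $D(k_j)$ with the prescribed central character occurs, at a reducibility point of a principal series, in a short exact sequence of $(\cG_\infty,K_\infty)$-modules
\[
0\longrightarrow D(k_j)\longrightarrow I(k_j)\longrightarrow V_{\mu_j}(k_j)\longrightarrow 0,
\]
the twist by the sign characters packaged in $\varepsilon$ affecting only how the normaliser $\omega$ acts (i.e. which ``half'' of the discrete series appears). Regarding $[I(k_j)\to V_{\mu_j}(k_j)]$ as a two-term resolution of $D(k_j)$, taking the tensor product of these $r$ complexes over the places of $\Sigma$ (K\"unneth for complexes of $\C$-vector spaces), and finally tensoring with the constant factor $\bigotimes_{\sigma_j\notin\Sigma}V_{\mu_j}(k_j)$ and building in $\psi$, I obtain a length-$r$ resolution of $(\cG_\infty,K_\infty)\times G(F)$-modules
\[
0\longrightarrow D_\psi(\underline{k})\longrightarrow J^\varepsilon_r\longrightarrow\cdots\longrightarrow J^\varepsilon_1\longrightarrow V_\psi(\underline{k})(\varepsilon)\longrightarrow 0,
\]
whose intermediate terms $J^\varepsilon_j$ are assembled from (limits of) principal series at the places of $\Sigma$ and from the $V_{\mu_j}(k_j)$ elsewhere.

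Next I would apply $\cA(-,\C)^\cU=\Hom_{(\cG_\infty,K_\infty)}(-,\cA^\cU)$, which is exact on this particular sequence: every module occurring in it has one and the same infinitesimal character, and the structure of $\cA^\cU$ as a $(\cG_\infty,K_\infty)$-module recalled in \S\ref{secdiscser} (relative injectivity, resp. semisimplicity of the relevant isotypic parts, $\cA^\cU$ being essentially coinduced from the trivial representation) forces the higher derived functors to vanish. Setting $I^\varepsilon_j(\underline{k}):=J^\varepsilon_j$, this produces precisely the claimed exact sequence of $G(F)$-modules
\[
0\longrightarrow\cA(V_\psi(\underline{k})(\varepsilon),\C)^\cU\longrightarrow\cA(I^\varepsilon_1(\underline{k}),\C)^\cU\longrightarrow\cdots\longrightarrow\cA(D_\psi(\underline{k}),\C)^\cU\longrightarrow 0.
\]

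It remains to identify the connecting morphism with $\partial^\varepsilon$. By the discussion preceding the theorem, a class in $H^0(G(F),\cA(D_\psi(\underline{k}),\C)^\cU)$ is the datum of a cuspform $\phi\in S_{\underline{k}}(\cU,\psi)$, recovered by evaluating the corresponding $(\cG_\infty,K_\infty)$-map at a distinguished lowest-weight vector. Splicing the exact sequence into $r$ short exact sequences and choosing $K_\infty$-equivariant set-theoretic lifts, the composite connecting map
\[
H^{0}(G(F),\cA(D_\psi(\underline{k}),\C)^\cU)\longrightarrow H^{r}(G(F),\cA(V_\psi(\underline{k})(\varepsilon),\C)^\cU)
\]
is represented by an $r$-cocycle obtained by successively taking primitives of closed $1$-forms on $\cH$; the essential point is that each of the $r$ splicing steps (one per place of $\Sigma$) contributes exactly one integration, the antiderivative being precisely the content of the extension $0\to D(k_j)\to I(k_j)\to V_{\mu_j}(k_j)\to 0$, so that on $(g_1,\dots,g_r)\in(G(F)^+)^r$ the cocycle equals, up to an explicit constant depending only on $\underline{k}$ (coming from the normalisation of the BGG maps and of the discrete-series matrix coefficient),
\[
\int_{\tau_1}^{g_1\tau_1}\cdots\int^{g_1\cdots g_r\tau_r}_{g_1\cdots g_{r-1}\tau_r}P_{\Sigma}(1,-\underline{z})\langle\phi(\underline{z},g),P^{\Sigma}\rangle\,d\underline{z}=\partial\phi(g_1,\dots,g_r).
\]
The $\varepsilon$-twist in the resolution, together with Shapiro's lemma for the finite-index inclusion $G(F)^+\subseteq G(F)$ --- which simultaneously yields the identification $H^{r}(G(F),\cA(V_\psi(\underline{k})(\varepsilon),\C)^\cU)\simeq H^{r}(G(F)^+,\cA(V_\psi(\underline{k}),\C)^\cU)(\varepsilon)$ and introduces the averaging $\sum_{\gamma\in G(F)/G(F)^+}\varepsilon(\gamma)\partial\phi^\gamma$ --- then turns this class into $\partial^\varepsilon\phi$. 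I expect this final step to be the main obstacle: one must control the BGG lifts and their primitives simultaneously at all $r$ archimedean places of $\Sigma$ and keep track of the $K_\infty$-types, the central character and the $\varepsilon$-signs through the $r$-fold splicing, in order to pin down both the iterated-integral formula and the precise constant. By contrast, constructing the resolution and checking exactness after $\Hom$ should be comparatively formal.
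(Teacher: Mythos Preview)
Your strategy coincides with the paper's: build the long exact sequence by tensoring the place-by-place short exact sequences $0\to D_{\mu_j}(k_j)\to I_{\mu_j}(k_j)^{\pm}\to V_{\mu_j}(k_j)(\varepsilon_{\sigma_j})\to 0$ over $\sigma_j\in\Sigma$, apply $\cA(-,\C)^\cU$, and then identify the iterated connecting map. Where you differ is in the execution of the last step. The paper does not appeal to an abstract ``primitives of closed $1$-forms'' argument or to BGG lifts; instead it fixes the explicit $\SO(2)\R^+$-section $s:V_\mu(k)\to I_\mu(k)$ of \eqref{sect}, writes the lifted cochain as $h_P(g_\infty)=\tilde\phi_v(s(g_\infty^{-1}P))(g_\infty,g)$, and computes $Rh_P$, $Lh_P$ directly via the Maass operators and the matrix-coefficient pairing of \S\ref{matcoef}, arriving at $\partial h_P/\partial\tau=\frac{k-1}{2\pi i}P(1,-\tau)\,\tilde\phi_v(f_k)/f_k$ together with its anti-holomorphic companion. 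This is what simultaneously yields the iterated integral and the constant $\prod_j(1-k_j)$. For the $\varepsilon$-part the paper does not invoke Shapiro: it uses the change of variable $\tau\mapsto c\bar\tau$ with $c\in G(F)\setminus G(F_\sigma)^+$ to convert the anti-holomorphic piece $\int\bar\omega$ into a conjugated holomorphic one, and then runs an induction over $S\subset\Sigma$; the averaging $\sum_\gamma\varepsilon(\gamma)\,\partial\phi^\gamma$ emerges from this concretely. Your Shapiro/restriction argument gives the same isomorphism on cohomology, but to actually match the cocycle with the stated formula (and to get the constant) you would still need the Maass-operator computation the paper carries out.
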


We obtain the above exact sequence from extensions of the $(\cG_{\sigma_i},K_{\sigma_i})$-modules of discrete series $D_{\mu_i}(k_i)$ at every place $\sigma_i\in \Sigma$. The archimedean local nature of these connection morphisms $\partial^{\varepsilon}$ implies that the $G(\A^\infty)$-representation generated by $\partial^\varepsilon\phi$ coincides with the restriction to $G(\A^\infty)$ of $\pi_\phi$, the automorphic representation attached to $\phi$.

The image of a cuspidal automorphic representation $\pi_\phi$ through the morphisms $\partial^\varepsilon$ is used in many papers to give a group cohomological construction of cyclotomic and anti-cyclotomic $p$-adic $L$-functions and Stickelberger elements attached to quadratic extensions of a totally real number field (see for instance \cite{Spiess}, \cite{Mol} and \cite{BG}). The explicit form of $\partial^\varepsilon$ given in Theorem \ref{mainres} provides the interpolation properties of these objects.

Another application is the construction of Stark-Heegner points. By means of the connection morphisms $\partial^{\varepsilon^{\pm 1}}$, where $(\varepsilon^+,\varepsilon^-)$ is a well chosen pair of characters, one can construct a
complex torus $\C^{[L:\Q]}/\Lambda$ attached to a weight $2$ automorphic representation $\pi_\phi$ with field of coefficients $L$. It is conjectured that such complex torus coincides with the abelian variety of $\GL_2$-type attached to $\pi_\phi$. In \cite{GMM}, we use the cohomological description of $\partial^{\varepsilon^{\pm 1}}$ to construct Stark-Heegner points in the complex torus, that we conjecture to be global points in the corresponding abelian variety. Such points are conjecturally defined over class fields of quadratic extensions of $F$ and satisfy explicit reciprocity laws.

\subsection*{Notation}

Throughout this paper, we will denote by $\int_{S^1}d\theta=\int_{\SO(2)}d\theta$ the Haar measure of $S^1=\SO(2)$ such that ${\rm vol}(S^1)=\pi$.

Let $F$ be a number field. For any place $v$ of $F$, we denote by $F_v$ its completion at $v$.
Given a finite set of places $S$ of $F$, we denote by $F_S$ the product of completions at every place in $S$. We denote by $F_\infty$ the product of completions at every archimedean place. Similarly, for any subset $\Sigma$ of archimedean places, $F_{\infty\setminus\Sigma}$ will be the product of completions at every archimedean place not in $\Sigma$.
We denote by $\A$ the ring of adeles of $F$. For any set $S$ of places of $F$, we write $\A^S$ for the ring adeles outside $S$. Consistent with this notation, we denote by $\A^\infty$ the ring of finite adeles of $F$.

\section{Discrete series}

\subsection{Finite dimensional representations}\label{Fin}

Let $A$ be a quaternion algebra defined over a local field $F$.
Let $K/F$ be an extension where $A$ splits. For any natural number $k\in\N$, 
let $\cP_{k-2}^K\simeq\Sym^{k-2}(K^2)$ be the finite $K$-vector space of homogeneous polynomials of degree $k-2$. We have a well defined action of $\GL_2(K)$ on $\cP_{k-2}^K$ given by
\begin{equation}\label{actpol}
\left( \left(\begin{array}{cc}a&b\\c&d\end{array}\right)\ast P\right)(x,y):=P\left((x,y)\left(\begin{array}{cc}a&b\\c&d\end{array}\right)\right)=P\left(ax+cy,bx+dy\right).
\end{equation}
If we fix an embedding
\[
\iota:A\longrightarrow A\otimes_F K\simeq\M_2(K),
\]
then $\cP_{k-2}^K$ is equipped with an action of $A^\times$.

We denote by $\det:A\ra F$ the reduced norm of $A$, and let us consider
\[
A^+=\{a\in A:\;\det(a)\in F^2\}.
\]
We write $V(k)_K=\cP_{k-2}^K\otimes {\rm det}^{\frac{2-k}{2}}$ with the natural action of $A^+$. It is clear that the centre of $A^\times$ acts trivially on $V(k)_K$. Notice that, if $k$ is even, the action of $A^+$ on $V(k)_K$ extends to a natural action of $A$.

\subsection{Discrete series and exact sequences}\label{secdiscser}

Assume that $F=\R$ and $A=\M_2(\R)$. Let $\cGL_2(\R)$ be the Lie algebra of $\GL_2(\R)$. 
For any $k\in \Z$ and $\mu\in \C$, we define $I_\mu(k)$ as the $(\cGL_2(\R),\SO(2))$-module of smooth admissible vectors in
\[
\left\{f:\GL_2(\R)^+\rightarrow\C:\;f\left(\left(\begin{array}{cc}t_1&x\\&t_2\end{array}\right)g\right)={\rm sign}(t_1)^k(t_1t_2)^{\frac{\mu}{2}}\left(\frac{t_1}{t_2}\right)^{\frac{k}{2}}f(g)\right\}.
\]

Write $V_\mu(k)=V(k)_\C\otimes\det^{\frac{\mu}{2}}$.
If we assume that $k\geq 2$, we have the well defined morphism of $(\cGL_2(\R),\SO(2))$-modules
\[
\iota:V_{\mu}(k)\longrightarrow I_{\mu}(2-k);\qquad \iota(P)\left(\begin{array}{cc}a&b\\c&d\end{array}\right)=(ad-bc)^{\frac{2-k+\mu}{2}}P(c,d).
\]
Moreover, we have a $(\cGL_2(\R),\SO(2))$-invariant pairing (see \cite[\S 2]{Bump})
\[
\langle\;,\;\rangle_I:I_\mu(k)\times I_{-\mu}(2-k)\longrightarrow\C;\qquad (f,g)\longmapsto\int_{\SO(2)}f(\theta)g(\theta)d\theta,
\]
providing the morphism of $(\cGL_2(\R),\SO(2))$-modules
\[
\bar\varphi:I_\mu(k)\longrightarrow V_{-\mu}(k)^\vee;\qquad \langle\bar\varphi(f),P\rangle_V:=\langle f,\iota(P)\rangle_I.
\]
Composing with the natural $\GL_2(\R)^+$-morphism
\begin{equation}\label{dualV}
V_{-\mu}(k)^\vee\longrightarrow V_\mu(k);\qquad F\longmapsto P_F(x,y)=\langle F,(Yx-Xy)^{k-2}\rangle_{V_{(X,Y)}},
\end{equation}
we obtain a map
\begin{eqnarray*}
\varphi:I_\mu(k)&\longrightarrow& V_\mu(k);\\
\varphi(f)(x,y)&=&\langle\bar\varphi(f),(Yx-Xy)^{k-2}\rangle_{V_{(X,Y)}}=\int_{S^1}f(\theta)(y\sin\theta +x\cos\theta)^{k-2}d\theta
\end{eqnarray*}

\begin{remark}\label{SymmProd}
Notice that we have the symmetry
\begin{eqnarray*}
\langle F,P_G\rangle_{V}&=&\langle F,\langle G,(Yx-Xy)^{k-2}\rangle_{V_{(X,Y)}}\rangle_{V_{(x,y)}}\\
&=&(-1)^k\langle G,\langle F,(Xy-Yx)^{k-2}\rangle_{V_{(x,y)}}\rangle_{V_{(X,Y)}}=(-1)^k\langle G,P_F\rangle_{V},
\end{eqnarray*}
for all $F,G\in V_\mu(k)^\vee$.
\end{remark}

The kernel of $\varphi$ is $D_\mu(k)$ the \emph{Discrete Series} $(\cGL_2(\R),O(2))$-module of weight $k$ and central character $x\mapsto {\rm sign}(x)^k|x|^\mu$. 
This definition implies that $D_\mu(k)$ lies in the following exact sequence of $(\cGL_2(\R),SO(2))$-modules:
\begin{equation}\label{exseqdisc}
0\longrightarrow D_\mu(k)\longrightarrow I_\mu(k)\stackrel{\varphi}{\longrightarrow} V_\mu(k)\longrightarrow 0.
\end{equation}

Since any $g\in\GL_2(\R)^+$ can be written uniquely as $g=u\cdot \tau(x,y)\cdot\kappa(\theta)$, where
\[
u\in\R^+,\;\tau(x,y)=\left(\begin{array}{cc}y^{1/2}&xy^{-1/2}\\&y^{-1/2}\end{array}\right)\in B,\;\kappa(\theta)=\left(\begin{array}{cc}\cos\theta&\sin\theta\\ -\sin\theta&\cos\theta\end{array}\right)\in\SO(2),
\]
we have that
\[
I_\mu(k)=\bigoplus_{t\equiv k\;({\rm mod}\;2)}\C f_{t};\qquad f_{t}(u\cdot \tau(x,y)\cdot\kappa(\theta))=u^\mu y^{\frac{k}{2}}e^{ti\theta}.
\]
The $(\cGL_2(\R),\SO(2))$-module structure of $I_s(k)$ can be described as follows: Let $L,R\in\cGL_2(\R)$ be the \emph{Maass differential operators} defined in \cite[\S 2.2]{Bump}
\[
 L=e^{-2i\theta}\left(-iy\frac{\partial}{\partial x}+y\frac{\partial}{\partial y}-\frac{1}{2i}\frac{\partial}{\partial \theta}\right),\quad R=e^{2i\theta}\left(iy\frac{\partial}{\partial x}+y\frac{\partial}{\partial y}+\frac{1}{2i}\frac{\partial}{\partial \theta}\right).
\]
Then, the $(\cGL_2(\R),\SO(2))$-module $I_s(k)$ is characterized by the relations:
\begin{eqnarray}
Rf_{t}=\left(\frac{k+t}{2}\right)f_{t+2};&\qquad& Lf_{t}=\left(\frac{k-t}{2}\right)f_{t-2}; \label{rel1}\\
\kappa(\theta)f_{t}=e^{ti\theta}f_{t};&\qquad&u f_{t}=u^\mu f_{t},\label{rel2}
\end{eqnarray}
for any $\kappa(\theta)\in\SO(2)$ and $u\in \R^{+}\subset\GL_2(\R)^+$. 

Write $z=x+iy$ and $\bar z=x-iy$. For $n\in\{0,1,\cdots, k-2\}$, let us consider the elements $P_n\in V_\mu(k)$, $P_n(x,y)=z^n\bar z^{k-2-n}$. It is clear that $\{P_n\}_{n=0,\cdots,k-2}$ is a basis for the $\C$-vector space $V_\mu(k)$. We compute that
\begin{eqnarray*}
\varphi(f)(x,y)&=&\int_{S^1}f(\theta)((2i^{-1})(z-\bar z)\sin\theta +2^{-1}(z+\bar z)\cos\theta)^{k-2}d\theta\\
&=&\int_{S^1}2^{2-k}f(\theta)(ze^{- i\theta}+\bar ze^{ i\theta})^{k-2}d\theta\\
&=&2^{2-k}\sum_{n=0}^{k-2}\binom{k-2}{n}P_n(x,y)\int_{S^1}f(\theta)e^{-i(2n-k+2)\theta}d\theta
\end{eqnarray*}
By orthogonality, we deduce that $\varphi(f_{2n-k+2})=2^{2-k}\pi\binom{k-2}{n}P_{n}(x,y)$.


Since $\kappa(\theta)P_n=e^{(2n-k+2)i\theta}P_n$, the morphism of $\C$-vector spaces
\begin{equation}\label{sect}
s:V_\mu(k)\longrightarrow I_\mu(k); \qquad s(P_n)=\frac{2^{k-2}}{\pi}\binom{k-2}{n}^{-1}f_{2n-k+2},
\end{equation}
defines a section of $\varphi$ as $\SO(2)\R^+$-modules.

\begin{remark}\label{prodV}
Since \eqref{dualV} is an isomorphism, we can define a non-degenerate $\GL_2(\R)^+$-invariant bilinear pairing $V_\mu(k)\times V_{-\mu}(k)\rightarrow\C$
\[
\langle P_F,Q\rangle=\langle F,Q\rangle_V,\qquad F\in V_{-\mu}(k)_\C^\vee,\quad Q\in V_{-\mu}(k),
\]
which is symmetric or antisymmetric depending on the parity of $k$, by Remark \ref{SymmProd}. Moreover, by the definition of \eqref{dualV},
\[
P(s,t)=\langle P,Q_{s,t}\rangle,\qquad Q_{s,t}(x,y)=(ys-xt)^{k-2}.
\]
In particular,
\begin{equation}\label{P-1tau}
P(-1, i)=i^{2-k}\langle P,P_0\rangle=i^{k-2}\langle P_{0},P\rangle.
\end{equation}
Since $s$ is a section of $\varphi$, we compute
\begin{equation}\label{prodVprodI}
\langle P,Q\rangle=\langle \bar\varphi(s(P)),Q\rangle_V=\langle s(P),\iota(Q)\rangle_I,
\end{equation}
for all $P,Q\in V_\mu(k)$. 
\end{remark}

\subsection{The $(\cGL_2(\R),O(2))$-module of discrete series}

We want to give structure of $(\cGL_2(\R),O(2))$-module to $I_\mu(k)$.
Hence, we have to define the action of $\omega=\left(\begin{array}{cc}1&\\&-1\end{array}\right)\in O(2)\setminus \SO(2)$. That is to say, we have to define
$\omega\in\End(I_\mu(k))$ such that
\[
(i)\quad\omega f_{t}\in\C f_{-t};\qquad (ii)\quad\omega^2=1;\qquad (iii)\quad\omega R=L\omega.
\]
If we write $\omega f_{t} = \lambda(t)f_{-t}$, condition (ii) implies that $\lambda(t)\lambda(-t)= 1$. Moreover, condition (iii) implies that $\lambda(t) = \lambda(t+2)$. We obtain two possible $(\cGL_2(\R),O(2))$-module structures for $I_\mu(k)$: Letting $\lambda(t)=1$ for all $t \equiv k\;{\rm mod}\;2$, or letting $\lambda(t)=-1$ for all $t \equiv k\;{\rm mod}\;2$. Write $I_\mu(k)^\pm$ for the $(\cGL_2(\R),O(2))$-module such that $\omega f_{t} =\pm f_{-t}$, respectively.

By abuse of notation, write also $V_\mu(k)$ and $V_\mu(k)_\R$ (in case $\mu\in\R$) for the $\GL_2(\R)$-representations
\[
V_\mu(k)=V_\mu(k)_\C=\cP_{k-2}^\C\otimes \mid\det\mid^{\frac{2-k+\mu}{2}},\qquad V_\mu(k)_\R=\cP_{k-2}^\R\otimes \mid\det\mid^{\frac{2-k+\mu}{2}}.
\]
\begin{remark}\label{no-G-pairing}
With this $\GL_2(\R)$-module structure, the pairing $\langle\cdot,\cdot\rangle$ introduced in Remark \ref{prodV} is not a $\GL_2(\R)$-invariant in general. In fact one can show that
\[
\langle gP,Q\rangle=\sign(\det g)^k\langle P,g^{-1}Q\rangle.
\]
\end{remark}
Note that, for any $f\in I_\mu(k)^\pm$, we have that $\omega f(\theta)=\pm f(-\theta)$,
hence we compute that,
\begin{eqnarray*}
\varphi(\omega f)(x,y)&=&\int_{S^1}\omega f(\theta)(x\cos\theta +y\sin\theta)^{k-2}d\theta\\
&=&\pm\int_{S^1}f(-\theta)(x\cos\theta +y\sin\theta)^{k-2}d\theta\\
&=&\pm\int_{S^1}f(\theta)(x\cos\theta -y\sin\theta)^{k-2}d\theta=\pm\omega(\varphi(f))(x,y)
\end{eqnarray*}
This implies that the exact sequence of $(\cGL_2(\R),SO(2))$-modules \eqref{exseqdisc} provides the exact sequences of $(\cGL_2(\R),O(2))$-modules
\begin{eqnarray}
0\longrightarrow D_\mu(k)\stackrel{\iota}{\longrightarrow} &I_\mu(k)^+&\longrightarrow V_\mu(k)\longrightarrow 0,\label{exseqdisc1}\\
0\longrightarrow D_\mu(k)\stackrel{\iota\circ I}{\longrightarrow} &I_\mu(k)^-&\longrightarrow V_\mu(k)(\varepsilon)\longrightarrow 0,\label{exseqdisc2}
\end{eqnarray}
where $\varepsilon:\GL_2(\R)\rightarrow{\pm 1}$ is the character given by $\varepsilon(g)=\sign\det(g)$, $D_\mu(k)$ is the $(\cGL_2(\R),O(2))$-module with fixed action of $\omega$ given by $\omega f(\theta) =f(\theta)$, and $I$ is the automorphism of  $(\cGL_2(\R),\SO(2))$-modules
\[
I:D_\mu(k)\longrightarrow D_\mu(k):\quad I(f_{t})=\sign(t)f_{t}.
\]
Note that $\iota\circ I$ is a monomorphism of  $(\cGL_2(\R),O(2))$-modules because $I(\omega f)=-\omega(I(f))$.

\subsection{Matrix coefficients}\label{matcoef}

Let us consider $A(\C)$ the $(\cGL_2(\R),\SO(2))$-module of admissible $C^\infty$ functions $f:\GL_2(\R)^+\rightarrow\C$. For any $f_0\in I_{-\mu}(2-k)$, I claim that
\[
\varphi_{f_0}:I_\mu(k)\longrightarrow A(\C),\qquad \varphi_{f_0}(f)(g_\infty)=\langle g_\infty f,f_0\rangle_I,\qquad g_\infty\in\GL_2(\R)^+,
\]
provides a well defines morphism of $(\cGL_2(\R),\SO(2))$-modules. Indeed, for any element of the Lie algebra $G\in \cGL_2(\R)$,
\begin{eqnarray*}
G\varphi_{f_0}(f)(g_\infty)&=&\frac{d}{d t}(\varphi_{f_0}(g_\infty\exp(tG)))\mid_{t=0}=\frac{d}{d t}(\langle g_\infty\exp(tG) f,f_0\rangle_I)\mid_{t=0}\\
&=&\varphi_{f_0}(Gf)(g_\infty).
\end{eqnarray*}

\subsection{$\R$-structures of Discrete series}

As we can see in \cite[\S 2.2]{Bump}, $R$ and $L$ are not in $\cGL_2(\R)$, they are \emph{Caley transformations} in $\cGL_2(\C)$ of elements in $\cGL_2(\R)$. In fact, $\cGL_2(\R)$ is generated by
\begin{eqnarray*}
R+L&=&-2y\sin(2\theta)\frac{\partial}{\partial x}+2y\cos(2\theta)\frac{\partial}{\partial y}+\sin(2\theta)\frac{\partial}{\partial \theta};\qquad u\frac{\partial}{\partial u};\\
 i(R-L)&=&-2y\cos(2\theta)\frac{\partial}{\partial x}-2y\sin(2\theta)\frac{\partial}{\partial y}+\cos(2\theta)\frac{\partial}{\partial \theta};\quad
 \mbox{and}\quad \frac{\partial}{\partial \theta}.
\end{eqnarray*}
If we define
$h_{t}:=f_{t}+f_{-t}\in I_\mu(k)^{\pm}$ and $g_{t}:=i(f_{t}-f_{-t})\in I_\mu(k)^{\pm}$, it is easy to compute that
\begin{eqnarray*}
(R+L)h_{t}=\left(\frac{k+t}{2}\right) h_{t+2}+\left(\frac{k-t}{2}\right)h_{t-2},&\quad& \frac{\partial}{\partial\theta}h_{t}=-t g_{t},\\
i(R-L)h_{t}=\left(\frac{k+t}{2}\right) g_{t+2}-\left(\frac{k-t}{2}\right)g_{t-2},&\quad&\omega h_{t}=\pm h_{t} \\
\kappa(\theta)h_{t}=\cos(t\theta)h_{t}-\sin(t\theta)g_{t},&\quad&\omega g_{t}=\mp g_{t}.
\end{eqnarray*}
Hence the $\R$-vector space $I_\mu(k)^\pm_\R\subset I_\mu(k)^\pm$ generated by $h_{t}$ and $g_{t}$ defines a $(\cGL_2(\R),O(2))$-module over $\R$.

We check that the morphisms $\varphi:I_\mu(k)^+\ra V_\mu(k)_\C$ and $\varphi:I_\mu(k)^-\ra V_\mu(k)(\varepsilon)_\C$ descend to morphisms of $(\cGL_2(\R),O(2))$-modules over $\R$
\[
\varphi^+:I_\mu(k)_\R^+\longrightarrow V_\mu(k)_\R,\qquad \varphi^-:I_\mu(k)_\R^-\longrightarrow V_\mu(k)_\R(\varepsilon).
\]
Hence the kernel $D_\mu(k)_\R\subset D_\mu(k)$ of $\varphi^+$ defines a $(\cGL_2(\R),O(2))$-module over $\R$, generated by $h_{k}$, such that $D_\mu(k)_\R\otimes_\R\C=D_\mu(k)$.
Nevertheless, the automorphism of  $(\cGL_2(\R),SO(2))$-modules $I:D_\mu(k)\ra D_\mu(k)$ does not descend to an automorphism of  $(\cGL_2(\R),SO(2))$-modules over $\R$ since $I(h_{t})=-\sign(t)ig_{t}$. In fact,
\[
I(D_\mu(k)_\R)=iD_\mu(k)_\R\subset D_\mu(k).
\]
We obtain the exact sequences of $(\cGL_2(\R),O(2))$-modules over $\R$
\begin{eqnarray}
0\longrightarrow D_\mu(k)_\R\stackrel{\iota}{\longrightarrow} &I_\mu(k)_\R^+&\longrightarrow V_\mu(k)_\R\longrightarrow 0,\label{exseqdiscreal1}\\
0\longrightarrow D_\mu(k)_\R\stackrel{\iota\circ I}{\longrightarrow} &i I_\mu(k)_\R^-&\longrightarrow i V_\mu(k)_\R(\varepsilon)\longrightarrow 0.\label{exseqdiscreal2}
\end{eqnarray}

\section{Connection morphisms}\label{Conn}

In this section, we assume that $G$ is the multiplicative group of a quaternion algebra that splits at the set of archimedean places $\Sigma$. Write $r=\#\Sigma$.
Let us consider the $\C$-vector space $\cA(\C)$ of functions $f:G(\A)\longrightarrow\C$ such that:
\begin{itemize}
\item There exists an open compact subgroup $U\subseteq G(\A^\infty)$ such that $f(g U)=f(g)$, for all $g\in G(\A)$.

\item Under a fixed identification $G(F_{\Sigma})\simeq\GL_2(\R)^r$, $f\mid_{G(F_{\Sigma})}\in C^\infty(\GL_2(\R)^r,\C)$.

\item Fixing $K_{\Sigma}$, a maximal compact subgroup of $G(F_{\Sigma})$ isomorphic to $O(2)^r$, we assume that any $f\in\cA(\C)$ is $K_{\Sigma}$-finite, namely, its right translates by elements of $K_{\Sigma}$ span a finite-dimensional vector space.

\item We assume that any $f\in\cA(\C)$ is ${\mathcal{Z}}$-finite, where ${\mathcal{Z}}$ is the centre of the universal enveloping algebra of $G(F_{\Sigma})$.
\end{itemize}
Write $\rho$ for the action of $G(\A)$ given by right translation, then $(\cA(\C),\rho)$ defines a smooth $G(\A^\infty)$-representation and a $(\cG_\infty,K_\infty)$-module, where $\cG_\infty$ is the Lie algebra of $G(F_{\Sigma})$ and $K_\infty=K_{\Sigma}\times G(F_{\infty\setminus{\Sigma}})$. Moreover, $\cA(\C)$ is also equipped with the $G(F)$-action:
\[
(h\cdot f)(g)=f(h^{-1}  g),\quad h\in G(F),
\]
where $g\in G(\A)$, $f\in \cA(\C)$.
Let us fix an isomorphism $G(F_{\Sigma})\simeq\GL_2(\R)^r$ that maps $K_{\Sigma}$ to $O(2)^r$ and let $V$ be a $(\cG_{\infty},K_{\infty})$-module. We define
\[
\cA(V,\C):=\Hom_{(\cG_{\infty},K_{\infty})}(V,\cA(\C)),
\]
endowed with the natural $G(F)$- and $G(\A^\infty)$-actions.

\begin{remark}
Note that if the $(\cG_\infty,K_\infty)$-module $V$ comes from a finite dimensional $G(F_\infty)$-representation $V$,
\[
\cA(V,\C)\simeq C(G(\A^\infty),\Hom(V,\C))=C(G(\A^\infty),V^\vee),
\]
where $V^\vee$ is seen as a $G(F)$-module by means of the usual injection $G(F)\hookrightarrow G(F_\infty)$, and the action of $G(F)$ on $C(G(\A^\infty),V^\vee)$ is given by $(h\ast f)(g)=h(f(h^{-1}g))$.
\end{remark}

Fix $\sigma\in \Sigma$, $\mu\in\C$ and let us consider $D_{\mu}(k)$, $V_{\mu}(k)$ and $V_{\mu}(k)(\varepsilon)$ as $(\cG_{\infty},K_{\infty})$-modules by means of the projection $G(F_\infty)\ra G(F_\sigma)$.
The exact sequences \eqref{exseqdisc1} and \eqref{exseqdisc2} provide the connection morphisms
\[
\partial_\sigma^{\epsilon_\sigma}:H^i(G(F),\cA(V\otimes D_{\mu}(k),\C))\longrightarrow H^{i+1}(G(F),\cA(V\otimes V_{\mu}(k)(\epsilon_\sigma),\C)),
\]
for any of the two characters $\epsilon_\sigma:G(F_\sigma)/G(F_\sigma)^+\ra\pm 1$.



Let $V_\R$ be a $(\cG_\infty,K_\infty)$-representation over $\R$ such that $V=V_\R\otimes\C$. This implies that we have a well defined complex conjugation on $V$ by conjugating on the second factor. Thus, we have a complex conjugation on $\cA(V,\C)$, given by
\[
\cA(V,\C)\ni\phi\longmapsto\bar\phi\in\cA(V,\C);\qquad\bar\phi(v)=\overline{\phi(\bar v)}.
\]

\begin{lemma}
Assume that $V=V_\R\otimes_\R\C$ for some $(\cG_\infty,K_\infty)$-module $V_\R$ over $\R$, and let $\mu\in\R$. Then, for any $\phi\in H^i(G(F),\cA(V\otimes D_{\mu}(k),\C))$, we have
\[
\overline{\partial_\sigma^{\epsilon_\sigma}(\phi)}=\epsilon_\sigma(c)\cdot\partial_\sigma^{\epsilon_\sigma}(\overline{\phi})
\]
for any $c\in G(F_\sigma)\setminus G(F_\sigma)^+$
\end{lemma}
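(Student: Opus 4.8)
The plan is to trace the complex conjugation through the construction of the connection morphism at the place $\sigma$, using the real structures of the discrete series established in the previous section. The key point is that the connection morphism $\partial_\sigma^{\epsilon_\sigma}$ is, up to the choice of splitting $\epsilon_\sigma$, built from the exact sequences \eqref{exseqdisc1} or \eqref{exseqdisc2} — equivalently, after the reality considerations, from \eqref{exseqdiscreal1} or \eqref{exseqdiscreal2}. For $\epsilon_\sigma$ trivial the relevant sequence is \eqref{exseqdiscreal1}, which is a sequence of $(\cGL_2(\R),O(2))$-modules defined over $\R$; tensoring with $\C$ and taking $\Hom_{(\cG_\infty,K_\infty)}(-,\cA(\C))$, the connecting map is compatible with the complex conjugations on source and target (the conjugation $\phi\mapsto\bar\phi$ defined right before the lemma), so $\overline{\partial_\sigma(\phi)}=\partial_\sigma(\bar\phi)$, which is the asserted identity since $\epsilon_\sigma(c)=1$. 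For $\epsilon_\sigma$ nontrivial the relevant sequence is \eqref{exseqdisc2}; here the issue is that the embedding $\iota\circ I$ involves the automorphism $I$, which by the discussion in \S "$\R$-structures of Discrete series" does \emph{not} descend to $\R$ but instead sends $D_\mu(k)_\R$ to $iD_\mu(k)_\R$, and the real form of \eqref{exseqdisc2} is \eqref{exseqdiscreal2}, carrying an extra factor of $i$ on the sub- and quotient-modules.

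Concretely, I would argue as follows. Fix $c\in G(F_\sigma)\setminus G(F_\sigma)^+$; under the identification $G(F_\sigma)\simeq\GL_2(\R)$ we may as well compute with $c=\omega$, since the sign $\sign\det(c)$ depends only on the coset and everything is $G(F_\sigma)^+$-equivariant. The complex conjugation on $\cA(D_\mu(k),\C)$ is induced from a conjugation on $D_\mu(k)$ whose fixed space is $D_\mu(k)_\R$; the complex conjugation on $\cA(V_\mu(k)(\epsilon_\sigma),\C)$ is induced from the one fixing $V_\mu(k)_\R$. In the trivial-character case both conjugations are compatible with the maps in \eqref{exseqdiscreal1}, hence with the Čech/bar-resolution description of the connecting homomorphism, so conjugation commutes with $\partial_\sigma$ on the nose. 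In the sign-character case, one passes from \eqref{exseqdisc2} to \eqref{exseqdiscreal2} by multiplying the injection by $i$: the class $\partial_\sigma^{\epsilon_\sigma}(\phi)$ is represented, on a bar cocycle, by lifting through $\varphi$ in $I_\mu(k)^-$ and applying the $G(F)$-differential, and the lift of a conjugation-real element through $I_\mu(k)_\R^-$ picks up exactly one factor of $i$ relative to the lift one would use for $\bar\phi$. Chasing this factor and comparing with the definition $\bar\phi(v)=\overline{\phi(\bar v)}$ yields $\overline{\partial_\sigma^{\epsilon_\sigma}(\phi)}=(-1)\,\partial_\sigma^{\epsilon_\sigma}(\bar\phi)=\epsilon_\sigma(\omega)\,\partial_\sigma^{\epsilon_\sigma}(\bar\phi)$, since $\epsilon_\sigma(\omega)=\sign\det(\omega)=-1$.

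To make the bookkeeping clean I would fix once and for all an explicit bar-resolution computing $H^\bullet(G(F),-)$, and describe $\partial_\sigma^{\epsilon_\sigma}$ as: given an $i$-cocycle with values in $\cA(V\otimes D_\mu(k),\C)$, compose with a $\C$-linear (not $(\cG_\infty,K_\infty)$-linear) section $s$ of $\varphi$ — the section \eqref{sect} is available — then apply the group coboundary, landing in the image of $\cA(V\otimes V_\mu(k)(\epsilon_\sigma),\C)$. The reality statement then reduces to: $\overline{s(\bar v)}$ differs from $s(v)$, read inside the appropriate real form, by the scalar $1$ in the $I_\mu(k)^+$ case and by $i^{\pm1}$ in the $I_\mu(k)^-$ case. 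This is precisely the content of the identities $I(h_t)=-\sign(t)ig_t$ and $I(D_\mu(k)_\R)=iD_\mu(k)_\R$ recorded earlier.

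\medskip

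The main obstacle I anticipate is purely notational rather than conceptual: one must be careful that the conjugation $\phi\mapsto\bar\phi$ on $\cA(V,\C)$ is \emph{twisted} (it conjugates the argument as well as the value), so that it does not naively commute with precomposition by $s$ unless $s$ itself is tracked through the real structure; the single factor of $i$ that produces the sign $\epsilon_\sigma(c)$ must be extracted from the mismatch between $I$ and its would-be real descent, and it is easy to lose or double it. Everything else — $G(F)^+$-equivariance to reduce to $c=\omega$, independence of the cocycle representative, compatibility of connecting maps with conjugation on short exact sequences of real modules — is routine.
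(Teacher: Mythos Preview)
Your strategy is the same as the paper's: both arguments rest entirely on the fact that the complex exact sequences \eqref{exseqdisc1}, \eqref{exseqdisc2} admit the real forms \eqref{exseqdiscreal1}, \eqref{exseqdiscreal2}, and that in the nontrivial case the real form of $D_\mu(k)$ lands in $iI_\mu(k)_\R^-$ rather than $I_\mu(k)_\R^-$. The paper packages this more cleanly: instead of lifting an individual cocycle through the section $s$ and chasing a scalar, it observes that the dualized sequences split into short exact sequences on conjugation-eigenspaces,
\[
0\longrightarrow\cA(V\otimes V_{\mu}(k)(\varepsilon_\sigma),\C)^{\pm\varepsilon_\sigma}\longrightarrow\cA(V\otimes I_{\mu}(k)^{\varepsilon_\sigma},\C)^{\pm\varepsilon_\sigma}\longrightarrow\cA(V\otimes D_{\mu}(k),\C)^{\pm1}\longrightarrow 0,
\]
so the connecting map automatically sends the $(\pm 1)$-eigenspace to the $(\pm\varepsilon_\sigma)$-eigenspace, and the lemma follows by decomposing $\phi$ as $\phi^++\phi^-$.

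One correction to your bookkeeping: the section $s$ of \eqref{sect} is the same $\SO(2)\R^+$-section for both $I_\mu(k)^+$ and $I_\mu(k)^-$, and it \emph{does} satisfy $\overline{s(\bar P)}=s(P)$ in both cases (check on the basis: $\bar P_n=P_{k-2-n}$ and $\bar f_t=f_{-t}$). The sign does not arise from $s$; it arises from the inclusion $\iota\circ I:D_\mu(k)\hookrightarrow I_\mu(k)^-$, which sends $D_\mu(k)_\R$ into $iI_\mu(k)_\R^-$. Concretely, if $\tilde\phi\in\cA(V\otimes I_\mu(k)^-,\C)$ lifts $\phi$, then $\overline{\tilde\phi}$ (conjugation with respect to $I_\mu(k)_\R^-$) restricts along $\iota\circ I$ to $-\bar\phi$, not $\bar\phi$, because $\overline{\iota I(d)}=-\iota I(\bar d)$. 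That is where the factor $\varepsilon_\sigma(c)=-1$ enters; once you relocate it there, your argument goes through and matches the paper's.
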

\begin{proof}
We denote by $\cA(V\otimes D_{\mu}(k),\C)^{\pm1}\subset\cA(V\otimes D_{\mu}(k),\C)$ the subspaces where complex conjugation acts by $\pm 1$, respectively.
Since exact sequences \eqref{exseqdisc1} and \eqref{exseqdisc2} descend to exact sequences \eqref{exseqdiscreal1} and \eqref{exseqdiscreal2}, we obtain
\[
0\longrightarrow\cA(V\otimes V_{\mu}(k)(\varepsilon_\sigma),\C)^{\pm\varepsilon_\sigma}\longrightarrow\cA(V\otimes I_{\mu}(k)^{\varepsilon_\sigma},\C)^{\pm\varepsilon_\sigma}\longrightarrow\cA(V\otimes D_{\mu}(k),\C)^{\pm1}\longrightarrow 0.
\]
Hence the connection morphism satisfies
\[
\delta_\sigma^{\epsilon_\sigma}\left(H^i(G(F),\cA(V\otimes D_{\mu}(k),\C)^{\pm1})\right)\subseteq H^{i+1}(G(F),\cA(V\otimes V_{\mu}(k)(\varepsilon_\sigma),\C)^{\pm\varepsilon_\sigma})
\]
and the result follows.
\end{proof}


\subsection{Explicit computation of the connection morphisms}

Let us consider the section $s:V_{\mu}(k)_\C\rightarrow I_{\mu}(k)$ of \eqref{sect}.

We compute that
\begin{eqnarray*}
\langle f_m, \iota P_n\rangle_I&=&\int_{S^1}e^{mi\theta}P_n(-\sin\theta,\cos\theta)d\theta\\
&=&\int_{S^1}i^{2n-k+2}e^{mi\theta}e^{ni\theta}e^{(n-k+2)i\theta}d\theta=\pi i^{2n-k+2}\delta(2n-k+2+m),
\end{eqnarray*}
where $\delta(n)$ is the Dirac delta.
Thus, $g_\infty^{-1}P=\sum_{n=0}^{k-2}\alpha_n(g_\infty)P_n$, where
\[
\alpha_n(g_\infty)=\frac{i^{k-2-2n}}{\pi}\langle f_{k-2-2n},\iota(g_\infty^{-1}P)\rangle_I=\frac{i^{k-2-2n}}{\pi}\langle g_\infty f_{k-2-2n},\iota(P)\rangle_I.
\]
Since $V_{\mu}(k)$ is generated by $\{P_0,\cdots,P_{k-2}\}$, we deduce that $\alpha_n=0$ unless $n\in\{0,\cdots,k-2\}$.
Since matrix coefficient morphism are $(\cG_\sigma,K_\sigma)$-module morphisms by \S\ref{matcoef}, we can compute on the one side
\begin{eqnarray*}
R\alpha_n(g_\infty)&=&\frac{i^{k-2-2n}}{\pi}\langle g_\infty (Rf_{k-2-2n}),\iota(P)\rangle_I\\
&=&(k-n-1)\frac{i^{k-2-2n}}{\pi}\langle g_\infty f_{k-2n},\iota(P)\rangle_I=(n+1-k)\alpha_{n-1}(g_\infty),\\
L\alpha_n(g_\infty)&=&\frac{i^{k-2-2n}}{\pi}\langle g_\infty (Lf_{k-2-2n}),\iota(P)\rangle_I\\
&=&(n+1)\frac{i^{k-2-2n}}{\pi}\langle g_\infty f_{k-2n-4},\iota(P)\rangle_I=-(n+1)\alpha_{n+1}(g_\infty).
\end{eqnarray*}
On the other side, we have that $s(P_n)=\frac{2^{k-2}}{\pi}\binom{k-2}{n}^{-1}f_{2n-k+2}$. Hence,
\begin{eqnarray*}
(n<k-2)\;Rs(P_n)&=&\frac{2^{k-2}}{\pi}\binom{k-2}{n}^{-1}(n+1)f_{2n-k+4}=(k-2-n)s(P_{n+1}),\\
(n>0)\;\;Ls(P_n)&=&\frac{2^{k-2}}{\pi}\binom{k-2}{n}^{-1}(k-n-1)f_{2n-k}=ns(P_{n-1}).
\end{eqnarray*}

Assume that $\tilde\phi\in \cA(V\otimes I_{\mu}(k),\C)$ and the action of $(\cG_\sigma,K_\sigma)$ on $V$ is trivial.
For any $f\in I_{\mu}(k)$ and $v\in V$, we will usually denote by $\tilde\phi_v(f)$ the expression $\tilde\phi(v\otimes f)$.
We aim to compute
\[
h_P(g_\infty):=\tilde\phi_v(s(g_\infty^{-1}P))(g_\infty,g),\qquad g_\infty\in G(F_\sigma)^+\simeq \GL_2(\R)^+,
\]
for all $g\in G(\A^\sigma)$, $P\in V_{\mu}(k)$, and $v\in V$. Since $h_P(g_\infty)=\sum_{n=0}^{k-2}\alpha_n(g_\infty)\tilde\phi_v(s(P_n))(g_\infty,g)$, we compute
\begin{eqnarray*}
R h_P&=&\sum_{n=0}^{k-2}\left((R\alpha_n)\tilde\phi_v(s(P_n))+\alpha_n\tilde\phi_v(Rs(P_n))\right)=\\
&=&\sum_{n=0}^{k-2}(n+1-k)\alpha_{n-1}\tilde\phi_v(s(P_n))+\frac{k-1}{2^{2-k}\pi}\alpha_{k-2}\tilde\phi_v(f_{k})+\sum_{n=0}^{k-3}(k-2-n)\alpha_n\tilde\phi_v(s(P_{n+1}))\\
&=&\frac{k-1}{\pi}2^{k-2}\left(\alpha_{k-2}\tilde\phi_v(f_{k})-\alpha_{-1}\tilde\phi_v(f_{2-k})\right)=\frac{k-1}{\pi}2^{k-2}\alpha_{k-2}\tilde\phi_v(f_{k}),\\
L h_P&=&\sum_{n=0}^{k-2}\left((L\alpha_n)\tilde\phi_v(s(P_n))+\alpha_n\tilde\phi_v(Ls(P_n))\right)=\\
&=&\sum_{n=0}^{k-2}(-n-1)\alpha_{n+1}\tilde\phi_v(s(P_n))+\frac{k-1}{2^{2-k}\pi}\alpha_{0}\tilde\phi_v(f_{-k})+\sum_{n=1}^{k-2}n\alpha_n\tilde\phi_v(s(P_{n-1}))\\
&=&\frac{k-1}{\pi}2^{k-2}\left(\alpha_{0}\tilde\phi_v(f_{-k})-\alpha_{k-1}\tilde\phi_v(f_{k-2})\right)=\frac{k-1}{\pi}2^{k-2}\alpha_{0}\tilde\phi_v(f_{-k}),
\end{eqnarray*}

Notice that $R=e^{2i\theta}2iy(\frac{\partial}{\partial\tau}-\frac{1}{4y}\frac{\partial}{\partial\theta})$ and $L=-e^{-2i\theta}2iy(\frac{\partial}{\partial\bar\tau}-\frac{1}{4y}\frac{\partial}{\partial\theta})$ with $\tau=x+iy$.
Since $s$ is a morphism of $\SO(2)\R^+$-modules, $h_P$ is a function of $\GL_2(\R)^+/\SO(2)\R^+\simeq\cH$, thus $h_P$ is a function on $\tau$ and $\bar\tau$.  Let us compute $\frac{\partial h_P}{\partial\tau}$ and $\frac{\partial h_P}{\partial\bar\tau}$: By \eqref{prodVprodI} and \eqref{P-1tau},
\begin{eqnarray*}
\frac{\partial h_P}{\partial\tau}(\tau,\bar\tau)&=&\frac{y^{-1}e^{-4\pi i\theta}}{2i}R(h_P)=\frac{(k-1)}{2\pi iye^{2i\theta}}i^{2-k}\langle g_\infty P_0,P\rangle\tilde\phi_v(f_{k})\\
&=&\frac{(k-1)}{2\pi iye^{2i\theta}}(g_\infty^{-1}P)(1,-i)\tilde\phi_v(f_{k})=\frac{(k-1)}{2\pi i}P(1,-\tau)\frac{\tilde\phi_v(f_{k})}{f_k}(\tau,\bar\tau,g),\\
\frac{\partial h_P}{\partial\bar\tau}(\tau,\bar\tau)&=&\frac{-y^{-1}e^{2i\theta}}{2i}L(h_P)=\frac{(1-k)}{2\pi iye^{-2i\theta}}i^{k-2}\langle g_\infty\omega P_{0},P\rangle\tilde\phi_v(f_{-k})\\
&=&\frac{(1-k)}{2\pi iye^{-2i\theta}}( g_\infty^{-1}P)(1,i)\tilde\phi_v(f_{-k})=\frac{(1-k)}{2\pi i}P(1,-\bar\tau)\frac{\tilde\phi_v(f_{-k})}{f_{-k}}(\tau,\bar\tau,g),
\end{eqnarray*}
by Remark \ref{no-G-pairing},
where $\tilde\phi_v(f_{k})f_k^{-1}$ and $\tilde\phi_v(f_{-k})f_{-k}^{-1}$ are seen as functions of $\GL_2(\R)^+/\SO(2)\R^+\simeq\cH$. A similar (and classical) calculation shows that $\tilde\phi_v(f_{k})f_k^{-1}$ and $\tilde\phi_v(f_{-k})f_{-k}^{-1}$ are holomorphic and anti-holomorphic, respectively.

For any $P\in V_{\mu}(k)$, $g\in G(\A^\sigma)$, $v\in V$ and $\phi\in \cA(V\otimes D_{\mu}(k),\C)$ the expressions
\begin{eqnarray}
\omega_\phi(P,v,g)(\tau)&:=&P(1,-\tau)\frac{\phi(v\otimes f_{k})}{2\pi i f_{k}}(\tau,g)d\tau,\label{difhol}\\
\bar\omega_\phi(P,v,g)(\bar\tau)&:=&P(1,-\bar\tau)\frac{\phi(v\otimes f_{-k})}{2\pi i f_{-k}}(\bar\tau,g)d\bar\tau\label{difantihol},
\end{eqnarray}
define holomorphic and anti-holomorphic forms in $\cH$, respectively. Moreover, it is easy to check that
\[
\omega_\phi(P,v,g)(\gamma^{-1}\tau)=\omega_{\gamma\phi}(\gamma P, v,\gamma g)(\tau),\qquad \bar\omega_\phi(P,v,g)(\gamma^{-1}\bar\tau)=\bar\omega_{\gamma\phi}(\gamma P, v,\gamma g)(\bar\tau),
\]
for any $\gamma\in G(F)\cap G(F_\sigma)^+$.
Assume that $c_\phi\in H^{n}(G(F),\cA(V\otimes D_{\mu}(k),\C))$ is represented by the $n$-cocycle $\phi:G(F)^{n}\rightarrow \cA(V\otimes D_{\mu}(k),\C)$.
Then $\partial_\sigma^{\epsilon_\sigma}(c_\phi)$ is represented by the $(n+1)$-cocycle $d^n\tilde\phi$, where $\tilde\phi(\underline\gamma)\in\cA(V\otimes I_{\mu}(k)_\C,\C)$ is any preimage of $\phi(\underline\gamma)$ for all $\underline\gamma\in G(F)^n$. We consider the $(n+1)$-cocycle $\partial_\sigma^{\varepsilon_\sigma}(\phi)=d^n\tilde\phi-d^n b$, where $b(\underline\gamma)(g)(v\otimes P)=\tilde\phi(\underline\gamma)(v\otimes s(P))(1,g)$. We compute, for all $P\in V_\mu(k)$, $v\in V$, $\underline\gamma=(\gamma_1,\cdots,\gamma_n)\in G(F)^n$, $\alpha\in G(F)$ and $g\in G(\A^\sigma)$,
\begin{eqnarray*}
\partial_\sigma^{\epsilon_\sigma}\phi(\alpha,\underline\gamma)(g)(v\otimes P)&=&\alpha\left((\tilde\phi-b)(\underline\gamma)\right)(v\otimes s(P))(1,g)+\\
&&+\sum_{i=1}^{n+1}(-1)^i(\tilde\phi-b)(\alpha\underline\gamma_i)(v\otimes s(P))(1,g),
\end{eqnarray*}
where $\alpha\underline\gamma_i=(\alpha,\gamma_1,\cdots,\gamma_{i-1}\gamma_{i},\cdots,\gamma_{n})$ for $i=1,\cdots,n$, and $\alpha\underline\gamma_{n+1}=(\alpha,\gamma_1,\cdots,\gamma_{n-1})$. Since $(\tilde\phi-b)(\underline\gamma)(v\otimes s(P))=0$ by construction, we obtain
\begin{eqnarray*}
\partial_\sigma^{\epsilon_\sigma}\phi(\alpha,\underline\gamma)(g)(v\otimes P)&=&\alpha\left(\tilde\phi(\underline\gamma)\right)(v\otimes s(P))(1,g)-\alpha\left(b(\underline\gamma)\right)(v\otimes P)(g)\\
&=&\tilde\phi(\underline\gamma)(v\otimes s(P))(\alpha^{-1},\alpha^{-1}g)-\tilde\phi(\underline\gamma)(v\otimes s(\alpha^{-1}P))(1,\alpha^{-1}g)
\end{eqnarray*}
Since $\tilde\phi(\underline\gamma)(v\otimes f_{k})=\phi(\underline\gamma)(v\otimes f_k)$ and $\tilde\phi(\underline\gamma)(v\otimes f_{-k})=\epsilon_\sigma(c)\phi(\underline\gamma)(v\otimes f_{-k})$, we deduce from the above computations that, for any $\alpha\in G(F)\cap G(F_\sigma)^+$,
\begin{eqnarray*}
\partial_\sigma^{\epsilon_\sigma}\phi(\alpha,\underline\gamma)(g)(v\otimes P)&=&(k-1)\int_i^{\alpha^{-1}i}\omega_{\phi(\underline\gamma)}(\alpha^{-1}P,v,\alpha^{-1}g)-\epsilon_\sigma(c) \bar\omega_{\phi(\underline\gamma)}(\alpha^{-1}P,v,\alpha^{-1}g)\\
&=&(1-k)\int_i^{\alpha i}\omega_{\alpha\phi(\underline\gamma)}(P,v,g)-\epsilon_\sigma(c) \bar\omega_{\alpha\phi(\underline\gamma)}(P,v,g).
\end{eqnarray*}

\begin{remark}
We have a well defined action of $G(F)/G(F)^+$ on $H^r(G(F)^+,M)$, for any $G(F)$-module $M$, given by
\[
c^\gamma(\alpha_1,\cdots,\alpha_r)=\gamma\left(c(\gamma^{-1}\alpha_1\gamma,\cdots,\gamma^{-1}\alpha_r\gamma)\right),
\]
for $\gamma\in G(F)$, and $\alpha_i\in G(F)^+$. The image of the restriction map
\[
H^r(G(F),M)\longrightarrow H^r(G(F)^+,M)
\]
lies in $H^0(G(F)/G(F)^+,H^r(G(F)^+,M))$.
\end{remark}

Let $\psi:\A^\times/F^\times\rightarrow\C^\times$ be a Hecke character such that, for any archimedean place $\sigma_i:F\hookrightarrow\R$, $\psi_{\sigma_i}(x)={\rm sign}(x)^{k_i}|x|^{\mu_i}$. 
Let $D_{\psi}(\underline{k})$ be the $(\cG_\infty,K_\infty)$-module obtained by making the tensor product of $D_{\mu_{i}}(k_i)$ at the place $\sigma_i$, if $\sigma_i\in\Sigma$, and $V_{\mu_{j}}(k_j)$ at the place $\sigma_j$, if $\sigma_j\not\in \Sigma$. An element of $D_{\psi}(\underline{k})$ is $f_{\underline{k}}\otimes P^{\Sigma}$, where $f_{\underline{k}}=\bigotimes_{\sigma_i\in\Sigma} f_{k_i}$, $f_{k_i}\in D_{\mu_{i}}(k_i)$ are the elements defined above, and $P^\Sigma\in\bigotimes_{\sigma_i\not\in\Sigma}V_{\mu_{i}}(k_i)$. Let $V_{\psi}(\underline{k})$ be the $(\cG_\infty,K_\infty)$-module obtained by making the tensor product of $V_{\mu_{i}}(k_i)$ at all the places $\sigma_i$.
For any character $\epsilon: G(F_\infty)/G(F_\infty)^+\simeq G(F)/G(F)^+\rightarrow\pm 1$, we denote by $H^r(G(F)^+,\cA(V_{\psi}(\underline{k}),\C))(\epsilon)$ the $\varepsilon$-isotypical component, namely, the subspace of $H^r(G(F)^+,\cA(V_{\psi}(\underline{k}),\C))$ such that the action of $G(F)/G(F)^+$ is given by the character. By the above remark, the restriction map provides an isomorphism
\[
H^r(G(F),\cA(V_{\psi}(\underline{k})(\varepsilon),\C))\simeq H^r(G(F)^+,\cA(V_{\psi}(\underline{k}),\C))(\epsilon).
\]
Using the above computations, we aim to give an explicit formula for the connection morphism:
\begin{theorem}\label{mainres}
Let $\phi$ be a weight $\underline{k}$ automorphic form of $G(\A)$ with central character $\psi$. Then $\phi$ defines an element of $\phi\in H^0(G(F),\cA(D_{\psi}(\underline{k}),\C))$. For a choice of signs at the places at infinity
\[
\epsilon: G(F_\infty)/G(F_\infty)^+\simeq G(F)/G(F)^+\rightarrow\pm 1,
\]
the composition of the connection morphisms $\delta_{\epsilon_\sigma}$, for $\sigma\in\Sigma$,
\[
\partial_\epsilon:H^0(G(F),\cA(D_{\psi}(\underline{k}),\C))\longrightarrow H^r(G(F),\cA(V_{\psi}(\underline{k})(\epsilon),\C))\simeq H^r(G(F)^+,\cA(V_{\psi}(\underline{k}),\C))(\epsilon),
\]
can be computed as follows:
\[
\partial_\epsilon\phi=\prod_{j+1}^r(1-k_j)\sum_{\gamma\in G(F)/G(F)^+}\epsilon(\gamma)\partial\phi^\gamma,
\]
where $\partial\phi\in H^r(G(F)^+,\cA(V_{\psi}(\underline{k}),\C))$ is the class of the cocycle
\begin{equation*}\label{cocycleex}
(G(F)^+)^r\ni(g_1,g_2,\cdots, g_r)\longmapsto \int_{\tau_1}^{g_1\tau_1}\cdots\int^{g_1\cdots g_r\tau_r}_{g_1\cdots g_{r-1}\tau_r}P_{\Sigma}(1,-\underline{z})\frac{\phi( f_{\underline k}\otimes P^{\Sigma})}{ (2\pi i)^rf_{\underline{k}}}(\underline{z},1,g)d\underline{z},
\end{equation*}
for any $\underline{P}=P^{\Sigma}\otimes P_{\Sigma}\in V_{\underline\mu}(\underline{k})$, $\underline{z}=(z_1,\cdots,z_r),(\tau_1,\cdots,\tau_r)\in\cH^r$.
\end{theorem}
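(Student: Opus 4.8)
The strategy is to reduce the multivariable statement to an iterated application of the single-place connection morphism computation carried out in the previous subsection, and then to glue the result together over the double coset decomposition to pass from $H^r(G(F),\cA(V_\psi(\underline k)(\epsilon),\C))$ to the $\epsilon$-isotypical component $H^r(G(F)^+,\cA(V_\psi(\underline k),\C))(\epsilon)$. First I would fix an ordering $\sigma_1,\dots,\sigma_r$ of the places in $\Sigma$ and write $\partial_\epsilon = \partial_{\sigma_r}^{\epsilon_{\sigma_r}}\circ\cdots\circ\partial_{\sigma_1}^{\epsilon_{\sigma_1}}$, each factor being the connection morphism attached to the exact sequence \eqref{exseqdisc1} or \eqref{exseqdisc2} (depending on whether $\epsilon_{\sigma_j}$ is trivial or equals $\sign\det$) tensored with the identity on the remaining factors of $D_\psi(\underline k)$. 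For each $\sigma_j$ the explicit formula derived above applies verbatim: with the section $s$ of \eqref{sect} at the place $\sigma_j$ and the auxiliary cochain $b(\underline\gamma)(g)(v\otimes P)=\tilde\phi(\underline\gamma)(v\otimes s(P))(1,g)$, one obtains the representative involving $\int_i^{\alpha i}\omega_{\alpha\phi(\underline\gamma)}(P,v,g)-\epsilon_{\sigma_j}(c)\,\bar\omega_{\alpha\phi(\underline\gamma)}(P,v,g)$, with the constant $(1-k_j)$ pulled out. Since the matrix-coefficient morphisms at distinct archimedean places commute and act on disjoint tensor factors, iterating produces the constant $\prod_{j=1}^r(1-k_j)$ and the iterated integral $\int_{\tau_1}^{g_1\tau_1}\cdots\int_{g_1\cdots g_{r-1}\tau_r}^{g_1\cdots g_r\tau_r}$ paired against $P_\Sigma(1,-\underline z)$, with the function $\phi(f_{\underline k}\otimes P^\Sigma)/((2\pi i)^r f_{\underline k})(\underline z,1,g)$.

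Next I would verify the claim that $\phi$ lands in $H^0(G(F),\cA(D_\psi(\underline k),\C))$. This is exactly the statement recalled in the introduction: the $(\cG_\infty,K_\infty)$-module generated by a weight $\underline k$ cuspidal automorphic form is $D_\psi(\underline k)$, so evaluation $v\mapsto (g\mapsto \langle\phi(g),v\rangle)$ defines an element of $\Hom_{(\cG_\infty,K_\infty)}(D_\psi(\underline k),\cA^\cU)=\cA(D_\psi(\underline k),\C)^\cU$, and $G(F)$-invariance of this element is the automorphy of $\phi$ under $G(F)$. The identities $\tilde\phi(\underline\gamma)(v\otimes f_{k_j})=\phi(\underline\gamma)(v\otimes f_{k_j})$ and $\tilde\phi(\underline\gamma)(v\otimes f_{-k_j})=\epsilon_{\sigma_j}(c)\phi(\underline\gamma)(v\otimes f_{-k_j})$, which encode the difference between the $I_\mu(k)^+$ and $I_\mu(k)^-$ structures, are the mechanism by which the sign $\epsilon_{\sigma_j}(c)$ enters; I would spell these out at each place.

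Finally I would assemble the cocycle on $(G(F)^+)^r$. Starting from the $(n+1)$-cocycle formula $\partial_\sigma^{\epsilon_\sigma}\phi(\alpha,\underline\gamma)$ with $n=0$ at the first place (so $\phi$ itself is the $0$-cocycle), and then iterating, the resulting $r$-cocycle on $G(F)^+$ evaluated at $(g_1,\dots,g_r)$ telescopes into the nested path integral displayed in the theorem, by the same bookkeeping that turns $\int_i^{\alpha i}\omega_{\alpha\phi}$ into $\int_{\tau}^{g\tau}$ after translating base points and using the cocycle relation $\omega_\phi(P,v,g)(\gamma^{-1}\tau)=\omega_{\gamma\phi}(\gamma P,v,\gamma g)(\tau)$. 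To identify this with $\prod_j(1-k_j)\sum_{\gamma\in G(F)/G(F)^+}\epsilon(\gamma)\,\partial\phi^\gamma$ inside the $\epsilon$-isotypical component, I would invoke the Remark before the theorem: the restriction $H^r(G(F),M)\to H^r(G(F)^+,M)$ lands in the $G(F)/G(F)^+$-invariants, and twisting the coefficients $V_\psi(\underline k)$ by the character $\epsilon$ identifies $H^r(G(F),\cA(V_\psi(\underline k)(\epsilon),\C))$ with the $\epsilon$-isotypical component; the averaging sum $\sum_\gamma\epsilon(\gamma)\partial\phi^\gamma$ is precisely the projector onto that component applied to the class $\partial\phi$ of the unnormalized cocycle. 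The main obstacle is the combinatorial bookkeeping of the iterated connection morphism: one must check carefully that the auxiliary coboundaries $b$ at the successive places do not interfere (i.e. that $s$ at $\sigma_j$ remains a section compatible with the already-applied operations at $\sigma_1,\dots,\sigma_{j-1}$), and that the base-point changes in the nested integrals are consistent so that the final expression is genuinely a cocycle on $(G(F)^+)^r$ and not merely on a path-ordered product; this is where the bulk of the verification lies, the archimedean-place-by-place structure being what makes it tractable.
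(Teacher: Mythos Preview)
Your overall framework is right---iterate the single-place connection morphism and then descend to the $\epsilon$-isotypical component---but there is a genuine gap in the middle. After one application of $\partial_{\sigma_j}^{\epsilon_{\sigma_j}}$ you correctly obtain the representative $(1-k_j)\int_i^{\alpha i}\bigl(\omega_{\alpha\phi(\underline\gamma)}-\epsilon_{\sigma_j}(c)\,\bar\omega_{\alpha\phi(\underline\gamma)}\bigr)$, which contains \emph{both} a holomorphic and an anti-holomorphic piece. Iterating over all $r$ places therefore produces $2^r$ mixed terms, not the single holomorphic iterated integral you write down. Your sentence ``iterating produces \ldots\ the iterated integral $\int_{\tau_1}^{g_1\tau_1}\cdots$'' jumps over exactly this point, and your later remark that ``the averaging sum $\sum_\gamma\epsilon(\gamma)\partial\phi^\gamma$ is precisely the projector onto that component'' does not close the gap: applying the isotypical projector to the $2^r$-term cocycle does not by itself collapse it to the purely holomorphic $\partial\phi$.

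What is missing is the explicit identification of each anti-holomorphic piece with a conjugate of the holomorphic one. The paper does this by the change of variables $\tau\mapsto z=c\bar\tau$ with $c\in(G(F)\cap G(F_S)^+)\setminus(G(F)\cap G(F_\sigma)^+)$, which yields
\[
\int_i^{\alpha i}\bar\omega_{\alpha\partial_S\phi(\underline\gamma)}(P,v,g)
=-\int_{\tau_0}^{c\alpha c^{-1}\tau_0}\omega_{c\alpha\partial_S\phi(\underline\gamma)}(c\ast P,v,cg),
\]
using that $c\partial_S\phi(\underline\gamma)=\partial_S\phi(c\underline\gamma c^{-1})$ since $c\in G(F)\cap G(F_S)^+$. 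This converts the two-term expression at place $\sigma$ into $(1-k)\bigl(r(\alpha,\underline\gamma)+\epsilon_\sigma(c)\,c^{-1}r(c\alpha c^{-1},c\underline\gamma c^{-1})\bigr)$ with $r$ the purely holomorphic cocycle; iterating over $\Sigma$ then produces the sum over $G(F)/G(F)^+\cong(\Z/2\Z)^r$ directly, with each $\gamma$ contributing the sign $\epsilon(\gamma)$. The sum over $\gamma$ is thus not an a posteriori projector but arises term-by-term from converting the anti-holomorphic integrals. You should insert this change-of-variables step at each stage of the induction; the rest of your outline then goes through.
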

\begin{proof}
Let $S\subset \Sigma$ be a subset of archimedean places such that $\#S=s< r$. Assume that $\sigma=\sigma_j\in \Sigma\setminus S$ and let $k$ be its corresponding weight and $\mu=\mu_j$. Let $V=\bigotimes_{\sigma_i\in S'}D_{\mu_{i}}(k_i)\otimes \bigotimes_{\sigma_i\in \infty\setminus (\Sigma\setminus S)}V_{\mu_{i}}(k_i)(\varepsilon_{\sigma_i})$, where $S'=\Sigma\setminus (S\cup\{\sigma\})$. Thus, the composition of the connection morphisms corresponding to $\sigma_i\in S$, provides a morphism
\[
\delta_S:H^0(G(F),\cA(D_{\psi}(\underline{k}),\C))\longrightarrow H^s(G(F),\cA(V\otimes D_{\mu}(k),\C)).
\]
By the previous computations, if $P\in V_{\mu}(k)$, $g\in G(\A^\sigma)$, $v\in V$,
\[
\partial_\sigma^{\epsilon_\sigma}\partial_S\phi(\alpha,\underline\gamma)(g)(v\otimes P)=(1-k)\int_i^{\alpha i}\omega_{\alpha\partial_S\phi(\underline\gamma)}(P,v,g)-\epsilon_\sigma(c) \bar\omega_{\alpha\partial_S\phi(\underline\gamma)}(P,v,g).
\]

Notice that, letting $c\in (G(F)\cap G(F_S)^+)\setminus (G(F)\cap G(F_\sigma)^+)$,
by means of the change of variables $\tau=g_\infty i\mapsto z=c\bar\tau=c g_\infty \omega i\in\cH$, where $g_\infty\in\GL_2(\R)^+$, we obtain that
\begin{eqnarray*}
\int_i^{\alpha i} \bar\omega_{\alpha\partial_S\phi(\underline\gamma)}(P,v,g)&=&\int_i^{\alpha i} P(1,-\bar\tau)\frac{\alpha\partial_S\phi(\underline\gamma)(v\otimes f_{-k})(g_\infty,g)}{f_{-k}(g_\infty)}d\bar\tau\\
&=&-\int_{c(-i)}^{c\alpha (-i)} (c\ast P)(1,-z)\frac{c\alpha\partial_S\phi(\underline\gamma)(v\otimes f_{k})(cg_\infty\omega,cg)}{f_{k}(cg_\infty\omega)}dz\\
&=&-\int_{\tau_0}^{c\alpha c^{-1}\tau_0} \omega_{c\alpha\partial_S\phi(\underline\gamma)}(c\ast P,v,cg),
\end{eqnarray*}
where $\tau_0=c(-i)$, but in fact, this last expression does not depend on the choice of $\tau_0$ because $\cH$ is simply connected. Since $c\partial_S\phi(\underline\gamma)=\partial_S\phi(c\underline\gamma c^{-1})$ because $c\in G(F)\cap G(F_S)^+$, we obtain that
\[
\partial_\sigma^{\epsilon_\sigma}\partial_S\phi(\alpha,\underline\gamma)=(1-k)(r(\alpha,\underline\gamma)+\varepsilon(c)c^{-1}r(c\alpha c^{-1},c\underline\gamma c^{-1})),
\]
where $r$ is the cocycle
\[
r(\alpha,\underline\gamma)(g)(v\otimes P)=\int_i^{\alpha i}\omega_{\alpha\partial_S\phi(\underline\gamma)}(P,v,g).
\]
Applying a simple induction on $S$ we obtain the desired result.
\end{proof}

\bibliographystyle{plain}
\bibliography{ES-conn}

\begin{thebibliography}{1}

\bibitem{BG}
F.~Bergunde and L.~Gehrmann.
\newblock On the order of vanishing of stickelberger elements of hilbert
  modular forms.
\newblock {\em submitted}.

\bibitem{Bump}
D.~Bump.
\newblock {\em Automorphic forms and representations}, volume~55 of {\em
  Cambridge Studies in Advanced Mathematics}.
\newblock Cambridge University Press, Cambridge, 1997.

\bibitem{GMM}
X.~Guitart, M.~Masdeu, and S.~Molina.
\newblock Automorphic darmon points.
\newblock {\em preprint}.

\bibitem{Hida}
H.~Hida.
\newblock {\em Elementary theory of L-functions and Eisenstein series}.
\newblock London Math. Soc. Student Texts 85, Cambridge University Press, 2
  edition, 1993.

\bibitem{Mol}
S.~Molina.
\newblock Anticyclotomic $p$-adic l-functions and the exceptional zero
  phenomenon.
\newblock {\em submitted}.

\bibitem{Shim}
G.~Shimura.
\newblock {\em Introduction to the arithmetic theory of automorphic functions}.
\newblock Iwanami Shoten and Princeton University Press, 1971.

\bibitem{Spiess}
M.~Spie{\ss}.
\newblock On special zeros of {$p$}-adic {$L$}-functions of {H}ilbert modular
  forms.
\newblock {\em Invent. Math.}, 196(1):69--138, 2014.

\end{thebibliography}

\end{document}